\newtheorem{thm}{Theorem}[section]
\newtheorem{cor}[thm]{Corollary}
\newtheorem{lemma}[thm]{Lemma}
\newtheorem{prop}[thm]{Proposition}
\theoremstyle{definition}
\newtheorem{defn}[thm]{Definition}
\theoremstyle{remark}
\newtheorem*{question*}{Question}
\newtheorem*{answer*}{Answer}
\newcommand{\bb}[1]{\mathbb{#1}}
\def\eps{\epsilon}
\def\del{\delta}
\def\S1{\mathbb S}
\def\Snm1{\mathbb S^{n-1}}
\title{Stable phase retrieval with low-redundancy frames}
\date{}
\author{Bernhard G. Bodmann}
\address{651 Philip G. Hoffman Hall, Mathematics Department, University of Houston, Houston, TX 77204-3008}
\author{Nathaniel Hammen}
\address{651 Philip G. Hoffman Hall, Mathematics Department, University of Houston, Houston, TX 77204-3008}
\begin{document}
\maketitle

\begin{abstract}
We investigate the recovery of vectors from magnitudes of frame coefficients when the frames have a low redundancy,
meaning a small number of frame vectors compared to the dimension of the Hilbert space. We first show that for vectors in $d$ dimensions,
$4d-4$ suitably chosen frame vectors are sufficient to uniquely determine each signal, up to an overall unimodular constant,
from the magnitudes of its frame coefficients. Then we discuss the effect of noise and
show that 
$8d-4$ frame vectors provide a stable recovery if part of the frame coefficients is bounded away from zero. 
In this regime, perturbing the magnitudes of the frame coefficients by noise that is sufficiently small
results in a recovery error that is at most proportional to the noise level.   
\end{abstract}

\section{Introduction}

Phase retrieval is a topic that is currently extensively researched. Part of the effort is directed towards
applications in X-ray crystallography, where the Fourier transform dictates the form of the measured quantities
from which a signal is recovered \cite{Fien,BunkDPDSSV:07}. It is well known that the magnitudes
of the Fourier transform need to be complemented by additional information about the signal to 
make recovery feasible \cite{Aku}, for example the magnitudes of the fractional Fourier transform \cite{Jaming:10}.
It is then a challenge whether the additional measurements can be realized experimentally.
Another main motivation for phase retrieval is quantum communication, where quantum states need to
be estimated from the relative frequencies of outcomes occurring in quantum measurements \cite{Finkelstein:04}. In this paper,
we investigate the abstract question of finding a small number
of linear measurements such that their magnitudes characterize 
 a vector in a finite dimensional Hilbert space,
up to an overall unimodular constant.
In addition, we wish to make the recovery procedure resilient against noise affecting the magnitude measurements.
The central idea is that the redundancy inherent in the frame coefficients,
resulting from the linear dependencies among the frame vectors, compensates the loss of information when
passing from frame coefficients to their magnitudes. In fact, in frame theory the notion of redundancy
is usually understood to be the number of frame vectors divided by the dimension of the Hilbert space. In this paper, we choose the 
frame vectors in a specific way to recover the vectors, up to a unimodular constant.
We address the following main questions:
 How small can we choose the size of a frame and still characterize each vector uniquely?
What conditions ensure that the vector can be recovered with a guaranteed accuracy if the measured
magnitudes are affected by noise? 

Several strategies have been applied to the problem of phase retrieval, for example the reformulation of recovery in terms
of the rank-one hermitian $x \otimes x^*$ \cite{BalanCE:06,BalanBCE:07,BalanBCE:09}. This was solved with techniques from compressed sensing
by rank minimization in an underdetermined system \cite{CandesSV:11,CandesESV:11,
CandesL:12,DemanetH:12,EldarM:12}, or even without rank minimization \cite{WaldspurgerAM:12}.
However, this technique does not specify what redundancy, i.e.\ the 
is sufficient for recovery. Other recovery procedures embed in even higher dimensional spaces by taking
more tensor powers of $x$ with itself \cite{Balan:12}.
Constructions based on expander graphs and the polarization identity
give us randomized constructions with an explicit bound on the redundancy \cite{Bandeira}. However, this is still far from
the necessary number of vectors derived from the theory of projective embeddings \cite{HeinosaariMW:11},
see also \cite{Milgram:67,Steer:70,Mukherjee:81}.

The recovery procedure outlined here assumes that the signal is realized as a complex polynomial of degree $d-1$. 
In the absence of noise, the recovery proceeds in several steps:
\begin{enumerate}
\item The measured quantities are $\{|f(\omega_j)|^2, |f(\omega_j) + \nu_l f'(\omega_j) |^2 : 0 \le j \le 2n-2, 0 \le j \le 2\}$
where $\omega_j = e^{2\pi i j/(2n-3)}$ and $\nu_l = e^{2\pi i l/3}$. Using the Dirichlet kernel and the polarization identity, these are extrapolated
to the values $|f(z)|^2$ and $f'(z) \overline{f(z)}$ for each $z$ in the unit circle.
\item From the values of these two functions on the unit circle we determine moments of the roots of $f$.
\item The moments determine the polynomial up to an overall unimodular constant.
\end{enumerate}
We investigate how the presence of noise affects each of these steps, and show that under certain conditions, for all sufficiently small
$\eps$, perturbing the measured quantities up to $\eps$ still gives an approximate recovery with an error of order $\epsilon$. 
This provable stability only extends up to a certain noise level. Numerical experiments show that the domain in which the linear
error bound holds extends far beyond this level.

This paper is organized as follows: After fixing the notation, we show in Section~\ref{sec:inj} that a vector, up to a unimodular constant,
is determined by a specific choice of $4d-4$ measurements. Section~\ref{sec:stab} establishes criteria for stability of the recovery procedure,
complemented with the results of a numerical simulation.

\section{Injectivity of the magnitude map}\label{sec:inj}


\begin{defn}
The space of complex polynomials of degree at most $n$ is denoted as $P_n$.
It is equipped with the inner product induced by the Lebesgue measure on the unit circle,
so $p, q \in P_n$ have the inner product
$$
  \langle p, q \rangle = \int_{[0, 2\pi]} p(e^{it}) \overline{q(e^{it})} dt
$$
where the overbar denotes complex conjugation. The space of trigonometric polynomials
of degree at most $n$, henceforth called $T_n$, is understood to consist of all linear combinations of complex polynomials 
and of their complex conjugates. 
\end{defn}

Thus,  $P_n$ is the subspace of analytic functions in $T_n$. On the other hand,
the map $A:f \mapsto |f|^2$ Takes $f \in P_n$ to a trigonometric polynomial in $T_n$.
The first question we wish to resolve is at how many points $A(f)$ needs to be 
evaluated in order to determine $\{ \lambda f: |\lambda| = 1\}$. The second problem
is that of noisy recovery. If the measured quantities are perturbed, is it possible
to estimate the set accurately?

The following theorem shows that to determine a polynomial of degree at most $n-1$ up to a unimodular constant, is enough to know its magnitude 
at $4n-4$ points in the complex plane. An essential ingredient is a result of Philippe Jaming's \cite{Jaming:10}, here the special case for polynomials.

\begin{lemma} Let $d \in \mathbb N$.
If $g\in P_{d-1}$ then it is determined up to a unimodular constant by the values of $|g|^2$ on two lines $\mathbb L$ and $\mathbb L_\alpha$ that
intersect in an angle $\alpha \in \mathbb R \setminus \pi \mathbb Q$.
\end{lemma}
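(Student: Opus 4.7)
The plan is to translate the geometric statement into an algebraic one about the zero-multisets of $g$, and then rule out the residual ambiguities using the arithmetic condition on $\alpha$. Without loss of generality I translate so that the two lines meet at the origin and rotate so that $\mathbb{L} = \mathbb{R}$ and $\mathbb{L}_\alpha = e^{i\alpha}\mathbb{R}$; both operations preserve membership in $P_{d-1}$. The data then consists of two real-coefficient polynomials of degree $2(d-1)$, namely $p(t) = |g(t)|^2$ and $q(t) = |g(te^{i\alpha})|^2$ for $t \in \mathbb{R}$.

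Writing $g(z) = c\prod_{k=1}^{d-1}(z - r_k)$, a direct expansion shows that the multiset of complex roots of $p$ is $\{r_k,\bar r_k\}_k$ while that of $q$ is $\{r_k e^{-i\alpha},\bar r_k e^{i\alpha}\}_k$. Knowledge of $p$ alone determines $g$ up to $|c|$ (which is absorbed in the desired unimodular factor) together with, for each non-real $r_k$, a binary choice between $r_k$ and $\bar r_k$ -- the familiar zero-flip ambiguity. Any candidate $\tilde g \in P_{d-1}$ that also reproduces $p$ is therefore obtained from $g$ by flipping a subset $S$ of non-real zeros to their conjugates, and demanding that $\tilde g$ reproduce $q$ as well forces the multiset identity
\[
  \{r_k e^{-i\alpha},\, \bar r_k e^{i\alpha}\}_{k \in S} = \{\bar r_k e^{-i\alpha},\, r_k e^{i\alpha}\}_{k \in S}.
\]

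The crux is to show that, when $\alpha \notin \pi\mathbb{Q}$, this identity forces the swap to leave the multiset $\{r_k\}$ invariant, so $\tilde g = \lambda g$ with $|\lambda| = 1$. I would interpret the matching as a permutation of $S$ whose steps are the rotation $R\colon z \mapsto z e^{2i\alpha}$ or the complex conjugation $C\colon z \mapsto \bar z$, and use the relation $CR = R^{-1}C$ to put each cycle of the permutation in one of the normal forms $R^n$ or $R^nC$. Since $R$ has infinite order on the unit circle when $\alpha/\pi$ is irrational, the $R^n$ case collapses to $n = 0$ and arranges the cycle as a chain of conjugations producing a conjugate pair of roots, while the $R^nC$ case constrains $a_{j_0} = \rho e^{in\alpha}$ for some real $\rho$ and again forces the cycle's elements into the conjugate pair $\{\rho e^{in\alpha},\rho e^{-in\alpha}\}$, possibly with repeated multiplicity. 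In either case the swap merely permutes equal entries of the root multiset, which gives the claim. The hard part is precisely this cycle-closure bookkeeping, keeping track of intermediate elements to ensure that no cycle secretly produces a real root in a position assumed to be non-real; as the excerpt emphasizes, it can be bypassed by invoking the polynomial specialization of Jaming's theorem \cite{Jaming:10} on uniqueness of phase retrieval from two fractional Fourier magnitudes whose orders differ by an irrational multiple of $\pi$, since restriction of $g$ to a rotated line is, up to a chirp factor, precisely such a fractional Fourier transform.
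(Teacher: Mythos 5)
Your reduction is the same one the paper makes: normalize to $\mathbb{L}=\mathbb{R}$ and $\mathbb{L}_\alpha=e^{i\alpha}\mathbb{R}$, note that each restricted magnitude extends to a polynomial whose root multiset is the reflection-symmetric double of the roots of $g$, and reduce the lemma to showing that the resulting multiset identity forces any admissible conjugate-flip to permute the root multiset. The difficulty is that you stop exactly where the proof has to happen. The cycle analysis in the group generated by $R\colon z\mapsto ze^{2i\alpha}$ and $C\colon z\mapsto\bar z$ is only sketched, and the conclusions you assert for the two normal forms do not follow from the normal forms alone: reducing a cycle's word to $R^n$ or $R^nC$ discards the intermediate points the cycle visits, and those need not form a single conjugate pair. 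For instance the word $RCRC$ reduces to the identity, yet the corresponding chain visits $x,\ \bar x,\ e^{2i\alpha}\bar x,\ e^{-2i\alpha}x$, four generally distinct points; similarly, in the reflection case the intermediate elements are assorted rotates of a point on the fixed line. So the claim that ``in either case the swap merely permutes equal entries of the root multiset'' is unsupported (and the symbol $a_{j_0}$ is undefined). Falling back on ``the polynomial specialization of Jaming's theorem'' does not close the gap, because the lemma \emph{is} that specialization --- the paper attributes it to Jaming and then supplies the proof; citing the result is a restatement of the attribution, not an argument.

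The paper closes precisely this gap with a device that avoids all cycle bookkeeping. Let $\sigma$ and $\tau$ denote the reflections about $\mathbb{L}$ and $\mathbb{L}_\alpha$. If $\mathcal{R}$ and $\mathcal{R}'$ are the root multisets of two candidates consistent with the data, then $\mathcal{R}+\sigma\mathcal{R}=\mathcal{R}'+\sigma\mathcal{R}'$ and $\mathcal{R}+\tau\mathcal{R}=\mathcal{R}'+\tau\mathcal{R}'$, so the signed difference $D=\mathcal{R}-\mathcal{R}'$ satisfies $\sigma D=-D$ and $\tau D=-D$, hence $(\tau\circ\sigma)D=D$. But $\tau\circ\sigma$ is the rotation by $2\alpha$ about the intersection point, an irrational rotation whose only finite invariant set is the fixed point, and anti-invariance under $\sigma$ kills any mass there; since $D$ has finite support, $D=0$. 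This is the statement your cycles would ultimately have to certify, and you should either replace the cycle analysis by this invariance argument or carry the combinatorics through in full.
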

\begin{proof}
Without loss of generality we take $\mathbb L = \mathbb R$ and $\mathbb L_\alpha = e^{i \alpha} \mathbb R$. Then by the positivity of $|g|^2$ on $\mathbb R$, it extends to a polynomial whose roots come in pairs related by complex conjugation. On the other hand, the same applies to the extension of $|g|^2$ from $\mathbb L_\alpha$
to the complex plane and reflections of its roots about $\mathbb L_\alpha$.
Now we have a selection principle based on the pairings: We pick as the roots of $g$ the intersection of the sets of roots obtained from the two extensions.
If $g$ were not determined by this, then it would need to have a root in the symmetric difference of the roots from the two extensions. However, the
symmetric difference is invariant under reflecting first about $\mathbb L$ and then about $\mathbb L_\alpha$. This composition of the two reflections is an irrational rotation, and so if the symmetric difference is non-empty, it gives a dense set of roots in a circle, which means $g=0$.
\end{proof}

\begin{thm} Let $f(z) = \sum_{j=0}^{d-1} c_j z^j$, let $\mathbb S$ be the unit circle as before and $\mathbb S_\alpha = \phi_\alpha(\mathbb R) \cup \{1\}$
with $\alpha \in \mathbb R \setminus \pi \mathbb Q$ and $\phi_\alpha (z) =  \frac{e^{i\alpha} z- \omega}{e^{i\alpha}z-1}$.
Then sampling $\{|f(z^{(\alpha)}_j)|^2\}$ on $2d-1$ equidistantly spaced points $\{z^{(\alpha)}_j\}_{j=0}^{2d-2}$ of $\mathbb S_\alpha$
and $\{|f(\omega_l)|^2\}_{l=2}^{2d-2}$ determines $f$ uniquely, up to an overall unimodular factor.
\end{thm}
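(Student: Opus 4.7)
The plan is to transport the two sampling curves to a pair of intersecting lines via the Möbius map $\phi_\alpha$ and then invoke the preceding lemma. The argument divides naturally into three stages.

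\textbf{Stage 1: extrapolation of $|f|^2$.} I would parameterize $\mathbb S_\alpha$ by its natural angle $\theta$: writing points on $\mathbb S_\alpha$ as $c+re^{i\theta}$, the restriction of $f$ becomes a polynomial of degree at most $d-1$ in $e^{i\theta}$, so $|f|^2$ on $\mathbb S_\alpha$ is a real trigonometric polynomial of degree at most $d-1$ in $\theta$. The Dirichlet kernel then recovers it from its values at $2d-1$ equispaced points. Similarly, $|f|^2$ on $\mathbb S$ is a trigonometric polynomial of degree at most $d-1$ in the usual angle; since $\phi_\alpha(\infty)=1$ and $\phi_\alpha(0)=\omega$ both lie in $\mathbb S_\alpha\cap\mathbb S$, evaluating the function recovered above at these two points supplies the samples missing from $\{|f(\omega_l)|^2\}_{l=2}^{2d-2}$ to make up a full collection of $2d-1$ equispaced samples on $\mathbb S$. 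Thus $|f|^2$ is known on all of $\mathbb S$ as well.

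\textbf{Stage 2: passage to intersecting lines.} Because $\phi_\alpha(\infty)=1\in\mathbb S$, the map $\phi_\alpha^{-1}$ sends the generalized circle $\mathbb S$ to a line $\mathbb L_\alpha$ through the origin, while sending $\mathbb S_\alpha$ onto $\mathbb R$. By conformality of Möbius maps, the angle at which $\mathbb R$ and $\mathbb L_\alpha$ meet equals the angle between $\mathbb S$ and $\mathbb S_\alpha$ at the intersection point $1$; a direct computation shows this is a nonzero rational combination of $\alpha$ and $\pi$, hence an irrational multiple of $\pi$ whenever $\alpha\in\mathbb R\setminus\pi\mathbb Q$.

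\textbf{Stage 3: reduction to the lemma.} I would then set
\[
  g(w) = (e^{i\alpha}w-1)^{d-1}\, f\bigl(\phi_\alpha(w)\bigr).
\]
The factor $(e^{i\alpha}w-1)^{d-1}$ exactly cancels the denominators that appear when expanding $f\circ\phi_\alpha$, so $g\in P_{d-1}$. From Stage~1 the quantity $|g(w)|^2=|e^{i\alpha}w-1|^{2(d-1)}\,|f(\phi_\alpha(w))|^2$ is known on both $\mathbb R$ and $\mathbb L_\alpha$. Applying the preceding lemma to $g$ on these two lines determines $g$ up to a unimodular constant, and solving for $f$ in the definition recovers $f$ up to the same constant.

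The main substantive step is Stage~1: recognizing that the two points $1$ and $\omega$ common to $\mathbb S$ and $\mathbb S_\alpha$ supply precisely the samples needed to bring the $2d-3$ direct values on $\mathbb S$ up to the full interpolatory set of $2d-1$. The remaining work is careful but routine bookkeeping with the Möbius transformation: confirming that $g$ is a genuine polynomial of degree $d-1$, that $\phi_\alpha^{-1}$ carries $\mathbb S$ to a line through $0$, and that the resulting angle with $\mathbb R$ is irrational in units of $\pi$.
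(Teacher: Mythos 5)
Your proposal is correct and follows essentially the same route as the paper: Dirichlet-kernel interpolation of the trigonometric polynomial $|f|^2$ on each circle (using the two common points $1$ and $\omega$ to complete the sample set on $\mathbb S$), a M\"obius transformation sending the common point to $\infty$ so that both circles become lines meeting at an angle in $\pi(\mathbb R\setminus\mathbb Q)$, and then the preceding lemma applied to the associated polynomial. The only cosmetic difference is your choice of $\phi_\alpha^{-1}$ in place of the paper's Cayley map (and note the finite intersection of your two lines is the image of $\omega$, not of $1$, though the angle is the same at either intersection point), which changes nothing of substance.
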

\begin{proof}
We proceed in several steps:

{\it Step 1.}
Given $f \in T_{d-1}$, $\omega = e^{2\pi i / (2d-1)} $ and the normalized Dirchlet kernel $D_{d-1}(z) = \frac{1}{2d-1} \sum_{j=-d+1}^{d-1} z^{j}$,
then $f(z) = \sum_{j=0}^{2d-2} f(\omega^j) D(z\omega^{-j})$. By substitution, for $a\in \mathbb C$, $r>0$, 
$f(z) = \sum_{j=0}^{2d-2} f(a+r\omega^j) D((z-a)\omega^{-j}/r)$.
This means from the values at $2d-1$ equidistant points on a circle we can interpolate any trigonometric polynomial of degree at most $d-1$.
Consequently, the magnitudes of $|f(z)|^2$ on $2d-1$ equidistant points on $\mathbb S_\alpha$ determine $|f(z)|^2$ on the entire circle. 
Because the circles $\mathbb S$ and $\mathbb S_\alpha$ intersect in $0$ and $\omega$,  this also determines the magnitudes
of $f$ at two of the sample points on $\mathbb S$. Once the additional magnitudes $\{|f(\omega_l)|^2\}_{l=2}^{2d-2}$ are obtained,
the Dirichlet kernel determines the magnitude of $|f|^2$ on all points in $\mathbb S \cup \mathbb S_\alpha$.

{\it Step 2.}
Using the Cayley map $z \mapsto  \frac{1+z}{1-z}$ and the associated polynomial
automorphism 
$$
   W f(z) = (1+z)^{k-1} f\Bigl(  -\frac{1- z}{1+z} \Bigr)
$$
we map both $\mathbb S$ and $\mathbb S_\alpha$ to lines $\mathbb L$ and $\mathbb L_\alpha$, $f$ to a polynomial $Wf$, and $|f|^2$
to a trigonometric polynomial $|Wf|^2$. By conformality, the angle between the lines at $\frac{1+\omega}{1-\omega}$ 
is
the same as the angle between the circles. However, the tangent vector $\phi_\alpha'(0)=-(1+\omega)e^{i\alpha}$ has an irrational argument, 
whereas $i\omega$ does not, so the two circles intersect with an angle that is an irrational multiple of $\pi$. By the conformality of the map,
the same is true for the intersection of $\mathbb L$ and $\mathbb L_\alpha$. 

{\it Step 3.}
Next, we use P. Jaming's argument for $Wf$ to show that magnitudes on the two lines 
uniquely determines $Wf$, up to a unimodular multiplicative constant \cite{Jaming:10}. 
Applying the inverse of the map $W$, the same applies to $f$.
\end{proof}

\section{Stable recovery in the presence of noise}\label{sec:stab}

The recovery procedure we outline below heavily relies on the analyticity properties of the function space. 
Although an absolute phase can not be measured, we have access to a relative phase
such as $f'(z)/f(z)$ for some $z \in \mathbb C$. If these values were known on the entire unit circle $ \S1=\{z \in \mathbb C: |z| =1\}$ then we could recover
$f$, up to an overall multiplicative constant from contour integrals and Cauchy formulas as outliend further below. 

We first examine how such integrals are affected by perturbed measurements. Unless noted otherwise, for any continuous $f: \mathbb C \to \mathbb C$,
$\|f\|_{\infty} = \max_{z \in\S1} |f(z)|$.

\begin{lemma}
Let $f: \mathbb C \to \mathbb C$ be an analytic function, and let $p_1:\S1\rightarrow\bb C$  and $p_2:\S1\rightarrow \mathbb R$ 
with $\|p_1\|_\infty<\eps$ and $\|p_2\|_\infty<\eps$. If there exists a $\del>0$ such that $(|f|^2+p_2)(z)>\del$ for all $z \in \S1$, then
$$\left\|\frac{f'\overline{f}+p_1}{|f|^2+p_2}-\frac{f'\overline{f}}{|f|^2}\right\|_\infty<\frac{\eps}{\del}\left(1+\left\|\frac{f'}{f}\right\|_\infty\right) \, .$$
\end{lemma}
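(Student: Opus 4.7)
The plan is to introduce the auxiliary quantity $\frac{f'\overline{f}}{|f|^2+p_2}$ and use the triangle inequality to split the difference into two pieces, each of which matches one of the two summands in the bound. Concretely, I would write
$$\frac{f'\overline{f}+p_1}{|f|^2+p_2}-\frac{f'\overline{f}}{|f|^2} = \underbrace{\frac{p_1}{|f|^2+p_2}}_{(\mathrm{I})} + \underbrace{f'\overline{f}\left(\frac{1}{|f|^2+p_2}-\frac{1}{|f|^2}\right)}_{(\mathrm{II})}$$
and bound each piece separately in the sup norm over $\S1$.

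For (I), the hypothesis $|f|^2+p_2>\delta$ on $\S1$ together with $\|p_1\|_\infty<\eps$ immediately yields $\|(\mathrm{I})\|_\infty<\eps/\delta$. For (II), I would put the two reciprocals over a common denominator to get
$$(\mathrm{II}) = \frac{-p_2\,f'\overline{f}}{(|f|^2+p_2)\,|f|^2},$$
and then factor the right-hand side as
$$(\mathrm{II}) = \frac{-p_2}{|f|^2+p_2}\cdot\frac{f'}{f}.$$
Using $\|p_2\|_\infty<\eps$ and $|f|^2+p_2>\delta$ on the first factor, and the definition of $\|f'/f\|_\infty$ on the second, gives $\|(\mathrm{II})\|_\infty < (\eps/\delta)\,\|f'/f\|_\infty$. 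Adding the two estimates yields the stated inequality.

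There is essentially no obstacle beyond bookkeeping; the only subtlety is the cancellation $|\overline{f}|/|f|^2 = 1/|f|$ used in factoring (II), which tacitly requires that $f$ not vanish on $\S1$ — but this is already implicit in the right-hand side, since $\|f'/f\|_\infty$ being finite presupposes $|f|>0$ on $\S1$ (and indeed, once $\eps<\delta$, the lower bound $|f|^2+p_2>\delta$ forces $|f|^2>\delta-\eps>0$). The triangle inequality step is strict because both $\|p_1\|_\infty<\eps$ and $\|p_2\|_\infty<\eps$ are strict, so the final inequality is strict as stated.
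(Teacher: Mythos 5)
Your proposal is correct and follows essentially the same route as the paper: the paper combines the difference into the single fraction $\frac{|f|^2p_1-f'\overline{f}p_2}{(|f|^2+p_2)|f|^2}$ and then splits it by the triangle inequality into exactly your terms (I) and (II), bounding them by $\eps/\del$ and $(\eps/\del)\|f'/f\|_\infty$ respectively. Your remark about $f$ not vanishing on $\S1$ being implicit in the finiteness of $\|f'/f\|_\infty$ is a small point the paper leaves tacit, but it does not change the argument.
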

\begin{proof}
\begin{align*}
\left\|\frac{f'\overline{f}+p_1}{|f|^2+p_2}-\frac{f'\overline{f}}{|f|^2}\right\|_\infty
&=\left\|\frac{|f|^2p_1-f'\overline{f}p_2}{(|f|^2+p_2)|f|^2}\right\|_\infty\\
&\le\left\|\frac{|f|^2p_1}{(|f|^2+p_2)|f|^2}\right\|_\infty+\left\|\frac{f'\overline{f}p_2}{(|f|^2+p_2)|f|^2}\right\|_\infty\\
&=\left\|\frac{p_1}{|f|^2+p_2}\right\|_\infty+\left\|\frac{p_2}{|f|^2+p_2}\frac{f'\overline{f}}{|f|^2}\right\|_\infty\\
&<\left\|\frac{\eps}{\del}\right\|_\infty+\left\|\frac{\eps}{\del}\frac{f'\overline{f}}{f\overline{f}}\right\|_\infty
=\frac{\eps}{\del}\left(1+\left\|\frac{f'}{f}\right\|_\infty\right)
\end{align*}
\end{proof}

Intrinsically the recovery procedure is linked to the moments of the roots of the polynomial inside the unit disk.
We study how Newton's identities are affected when the moments are perturbed.  
\begin{lemma}
Let $f$ be a complex polynomial with $N_0$ roots $\{z_j\}_{j=1}^{N_0}$ in the open unit disk. Let $f_i(z)=\sum_{k=0}^{N_0}b_kz^k=\prod_{j=1}^{N_0}(z-z_j)$ define the monic factor of $f$ whose roots are precisely the roots of $f$ that are inside the unit disk. Given the perturbed moments $\{\tilde \mu_k\}_{k=1}^{N_0}$ of the roots such that $|\tilde\mu_k-\mu_k|<\gamma$
for some $0 \le \gamma\le 1$ 
and $\mu_k=\sum_{j=1}^{N_0}z_j^k$ for all $k\in \{1, 2, \dots, N_0\}$ then there exists $C$ which only depends on $N_0$
such that $\{\tilde \mu_k\}_{k=1}^{N_0}$ uniquely determine
coefficients $\{\tilde{b}_k\}$ with  $|\tilde{b}_k-b_k| \le C \gamma $ for all $k \in \{1, 2, \dots, N_0\}$.
\end{lemma}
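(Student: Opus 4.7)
The plan is to invoke Newton's identities to translate bounds on perturbations of the power sums $\mu_k$ into bounds on perturbations of the elementary symmetric polynomials of the roots $\{z_j\}_{j=1}^{N_0}$, which in turn are (up to sign) the coefficients $b_k$. Since $\prod_{j=1}^{N_0}(z-z_j)=\sum_{k=0}^{N_0}(-1)^{N_0-k}e_{N_0-k}z^{k}$, where $e_m=e_m(z_1,\dots,z_{N_0})$ is the $m$-th elementary symmetric polynomial, we have $b_{N_0-m}=(-1)^{m}e_m$, so it suffices to produce a Lipschitz bound $|\tilde e_m - e_m| \le C_m\gamma$ for each $1\le m\le N_0$, with a constant $C_m$ depending only on $N_0$.

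First I would recall Newton's identities in the form
$$
m\,e_m \;=\; \sum_{k=1}^{m}(-1)^{k-1}\,e_{m-k}\,\mu_k,\qquad 1\le m\le N_0,
$$
with the convention $e_0=1$, so that each $e_m$ is expressed recursively as a polynomial in $\mu_1,\dots,\mu_m$ and $e_1,\dots,e_{m-1}$. I would then define $\tilde e_m$ by the same recursion applied to the perturbed moments $\tilde\mu_k$. Because the roots lie in the open unit disk, $|\mu_k|\le N_0$ and $|e_m|\le\binom{N_0}{m}$ are \emph{a priori} bounded in terms of $N_0$ alone. Combined with the assumption $\gamma\le 1$, this yields $|\tilde\mu_k|\le N_0+1$, so every term appearing in the recursion is controlled by a constant depending only on $N_0$.

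The core step is an induction on $m$. Assuming $|\tilde e_j - e_j|\le C_j\gamma$ for $j<m$, I would write
$$
m\,(\tilde e_m - e_m) \;=\; \sum_{k=1}^{m}(-1)^{k-1}\bigl(\tilde e_{m-k}\tilde\mu_k - e_{m-k}\mu_k\bigr)
$$
and apply the standard product-difference estimate $|\tilde e_{m-k}\tilde\mu_k - e_{m-k}\mu_k|\le|\tilde e_{m-k}-e_{m-k}||\tilde\mu_k|+|e_{m-k}||\tilde\mu_k-\mu_k|$, in which each factor is bounded by a constant depending only on $N_0$. This produces $|\tilde e_m - e_m|\le C_m\gamma$ with $C_m$ depending only on $N_0$ and $C_1,\dots,C_{m-1}$, closing the induction. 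Taking $C=\max_{1\le m\le N_0}C_m$ and translating back via $b_{N_0-m}=(-1)^m e_m$ and $b_{N_0}=\tilde b_{N_0}=1$ gives the claim.

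The main obstacle is essentially bookkeeping rather than anything analytically deep: one must verify that the inductive Lipschitz constants $C_m$ remain finite (i.e., that no small denominator appears), which is automatic because Newton's identities divide only by $m\ge 1$, and that the a priori bound $|e_m|\le\binom{N_0}{m}$ indeed holds independently of which polynomial $f$ with roots in the open unit disk one considers. Both are immediate, so the resulting constant $C$ depends only on $N_0$.
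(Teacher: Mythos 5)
Your proposal is correct and follows essentially the same route as the paper: the paper runs the identical induction directly on the coefficients via the recurrence $b_{N_0-k}=-\frac{1}{k}\sum_{l=1}^{k}\mu_l b_{N_0-k+l}$, which is exactly your Newton recursion for the $e_m$ after the sign change $b_{N_0-m}=(-1)^m e_m$, and it uses the same a priori bounds $|\mu_k|\le N_0$, $|b_{N_0-k}|\le\binom{N_0}{k}$, $|\tilde\mu_k|\le N_0+1$ together with the same product-difference estimate.
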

\begin{proof}
If we knew the values of $\mu_k$ we could recover the actual coefficients using Newton's identities, which give the recurrence relation $b_{N_0-k}=-\frac{1}{k}\sum_{l=1}^k\mu_l b_{N_0-k+l}$ for all $k$ from $1$ to $N_0$. Instead we use our approximations $\tilde\mu_k$ to find approximated coefficients $\tilde{b}_k$ using the recurrence relation $\tilde{b}_{N_0-k}=-\frac{1}{k}\sum_{l=1}^k\tilde\mu_l \tilde{b}_{N_0-k+l}$ with $\tilde{b}_{N_0}=1$. We inductively show that $|\tilde{b}_k-b_k|$ is $O(\gamma)$. For the base case, by assumption
$$\left|\tilde{b}_{N_0-1}-b_{N_0-1}\right|=\left|\tilde\mu_1b_{N_0}-\mu_1b_{N_0}\right|=|\tilde\mu_1-\mu_1|<\gamma \, .$$
For the inductive step we note that $|\mu_l|=|\sum_{j=1}^{N_0}z_j^l|\le\sum_{j=1}^{N_0}|z_j^l|\le N_0$, and if $S_k$ is the set of all combinations of $k$ roots of $f(z)$ inside the unit disk, then
$$|b_{N_0-k}|=\left|\sum_{S\in S_k}\prod_{z_j\in S}z_j\right|\le\sum_{S\in S_k}\prod_{z_j\in S}|z_j|\le\sum_{S\in S_k}1={{N_0}\choose{k}}$$
Thus, with the inductive assumption that for all $j<k$, there exists a constant $C_j$ such that $\left|\tilde{b}_{N_0-j}-b_{N_0-j}\right|<C_j\gamma$, we have
\begin{align*}
\left|\tilde{b}_{N_0-k}-b_{N_0-k}\right|
&=\frac{1}{k}\left|\sum_{l=1}^k\tilde\mu_l \tilde{b}_{N_0-k+l}-\sum_{l=1}^k\mu_l b_{N_0-k+l}\right|\\
&\le\frac{1}{k}\sum_{l=1}^k\left|\tilde\mu_l \tilde{b}_{N_0-k+l}-\mu_l b_{N_0-k+l}\right|\\
&=\frac{1}{k}\sum_{l=1}^k\left|\tilde\mu_l \tilde{b}_{N_0-k+l}-\tilde\mu_l b_{N_0-k+l}+\tilde\mu_l b_{N_0-k+l}-\mu_l b_{N_0-k+l}\right|\\
&\le\frac{1}{k}\sum_{l=1}^k\left(\left|\tilde\mu_l \tilde{b}_{N_0-k+l}-\tilde\mu_l b_{N_0-k+l}\right|+\left|\tilde\mu_l b_{N_0-k+l}-\mu_l b_{N_0-k+l}\right|\right)\\
&\le\frac{1}{k}\sum_{i=1}^k\left(|\tilde\mu_i|C_{k-i}\gamma+\left|b_{N_0-k+i}\right|\gamma\right)\\
&\le\frac{1}{k}\sum_{i=1}^k\left((|\mu_i|+\gamma)C_{k-i}\gamma+\left|b_{N_0-k+i}\right|\gamma\right)\\
&\le\frac{1}{k}\sum_{i=1}^k\left((N_0+1)C_{k-i}\gamma+{{N_0}\choose{k-i}}\gamma\right) \, ,
\end{align*}
thus $C_{k} = (1/k) \sum_{i=1}^k ((N_0+1) C_{k-i} + \left( {N_0 \atop k-i } \right))$ suffices.
\end{proof}

Next, we show how the coefficients of the monic polynomial factor containing the roots on the inside of the disk 
can be estimated from perturbed moments. 

\begin{thm}
Let $f(z)=\sum_{k=0}^Na_kz^k$, with fixed positive constants $m$ and $M'$ such that $0<m\le|f(z)|$ and $|f'(z)|\le M'$ for all $z$ on the unit circle $\S1$. Let 
$$\alpha=\frac{1}{1+2\left(1+\frac{M'}{m}\right)} \, ,$$
and $\eps>0$ with $\eps<\alpha m^2$, $p_1:\S1\rightarrow\bb C$ and $p_2:\S1\rightarrow\mathbb R$ with $\|p_1\|_\infty<\eps$, $\|p_2\|_\infty<\eps$, then
$$\tilde\mu_k=\frac{1}{2\pi i}\oint_{\S1}z^k\frac{f'\overline{f}+p_1}{|f|^2+p_2}dz $$
for $k \in \{1, 2, \dots, N_0\}$
observes
$$ | \mu_k - \tilde \mu_k | \le\frac{\eps}{(1-\alpha)m^2}\left(1+\frac{M'}{m}\right) \equiv \gamma
$$  
and if $\gamma \le 1$ then there exists $C$ which only depends on $N_0$ such that
$f_i(z)=\sum_{k=0}^{N_0}b_kz^k=\prod_{j=1}^{N_0}(z-z_j)$, the monic factor of $f$ whose roots are precisely the roots of $f$ that inside the unit disk,
has approximate coefficients $\{ \tilde b_k \}$ with 
$$
 |\tilde{b}_k-b_k| \le C \frac{\eps}{(1-\alpha)m^2}\left(1+\frac{M'}{m}\right) \, .
$$   
\end{thm}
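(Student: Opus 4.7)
The plan is a three-step reduction through the two lemmas already established in this section. First, I would recognize that on $\S1$ the identity $\bar z = 1/z$ forces $\overline{f(z)}/|f(z)|^2 = 1/f(z)$, so the integral defining $\tilde\mu_k$ is a noisy perturbation of a logarithmic derivative:
\begin{align*}
\frac{1}{2\pi i}\oint_{\S1} z^k \frac{f'(z)\overline{f(z)}}{|f(z)|^2}\,dz = \frac{1}{2\pi i}\oint_{\S1} z^k \frac{f'(z)}{f(z)}\,dz.
\end{align*}
The hypothesis $|f|\ge m>0$ on $\S1$ means no roots of $f$ meet the contour, so applying the residue theorem to $z^k f'(z)/f(z)$ recovers precisely $\mu_k = \sum_{j=1}^{N_0} z_j^k$, the sum taken over the roots of $f$ inside the open unit disk with multiplicity.

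Second, I would invoke the first lemma of this section to compare $\tilde\mu_k$ with the clean integral above. The assumption $\eps<\alpha m^2$ yields $|f|^2+p_2 \ge m^2-\eps > (1-\alpha)m^2$, so one may take $\del = (1-\alpha)m^2$ in that lemma. Since $\|f'/f\|_\infty \le M'/m$, the lemma then delivers
\begin{align*}
\left\|\frac{f'\overline{f}+p_1}{|f|^2+p_2} - \frac{f'\overline{f}}{|f|^2}\right\|_\infty < \frac{\eps}{(1-\alpha)m^2}\left(1+\frac{M'}{m}\right) = \gamma.
\end{align*}
The standard $ML$-estimate for contour integrals, with $|z^k|=1$ and the circle of length $2\pi$, gives $|\mu_k-\tilde\mu_k|\le \gamma$ at once.

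Third, to apply the previous lemma on Newton's identities I must verify its hypothesis $\gamma\le 1$, and this is where the specific form of $\alpha$ enters. A short computation shows $\alpha/(1-\alpha) = 1/(2(1+M'/m))$, so the standing assumption $\eps<\alpha m^2$ actually forces $\gamma<1/2$. The previous lemma then promotes the moment bound into the asserted coefficient bound with the same constant $C=C(N_0)$.

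The only conceptually interesting step is the first: spotting that on $\S1$ the ratio $f'\overline{f}/|f|^2$ collapses to the logarithmic derivative $f'/f$, so that its contour integral against $z^k$ already encodes $\mu_k$ via residues. After that the argument is a clean cascade, and the only bookkeeping subtlety is checking that $\alpha$ has been tuned precisely so that the chain of constants from $\eps$ to $\del$ to $\gamma$ to the coefficient error satisfies all the hypotheses of the preceding lemmas.
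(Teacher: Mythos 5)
Your proposal is correct and follows essentially the same route as the paper: reduce the perturbed integrand to the clean one via the first lemma with $\del=(1-\alpha)m^2$, identify $\frac{1}{2\pi i}\oint z^k f'\overline{f}/|f|^2\,dz$ with $\mu_k$ through the residue theorem, apply the $ML$-estimate, and feed the resulting $\gamma$ into the Newton-identities lemma. The only detail the paper adds that you omit is that the same computation showing $\gamma<1/2$ is used on $k=0$ to recover $N_0$ itself by rounding $\tilde\mu_0$; your observation that $\alpha$ is tuned so that $\eps<\alpha m^2$ forces $\gamma<1/2$ is exactly that calculation.
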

\begin{proof}
Note that
$$\forall z\in \S1,\,(|f|^2+p_2)(z)\ge m^2+p_2(z)\ge m^2-\eps>(1-\alpha)m^2$$
so by the first lemma we have
$$\left\|\frac{f'\overline{f}+p_1}{|f|^2+p_2}-\frac{f'\overline{f}}{|f|^2}\right\|_\infty
<\frac{\eps}{(1-\alpha)m^2}\left(1+\left\|\frac{f'}{f}\right\|_\infty\right)
\le\frac{\eps}{(1-\alpha)m^2}\left(1+\frac{M'}{m}\right)$$
If we let $N_0$ be the number of roots of $f$ inside the unit circle, and we let $\mu_k=\sum_{j=1}^{N_0}z_j^k$, the $k$th moment of the inner roots of $f$, then the residue theorem gives us that for any integer $k\in[0,N]$
$$\mu_k=\frac{1}{2\pi i}\oint_{\S1}z^k\frac{f'\overline{f}}{|f|^2}dz$$
If we let
$$\tilde\mu_k=\frac{1}{2\pi i}\oint_{\S1}z^k\frac{f'\overline{f}+p_1}{|f|^2+p_2}dz=\mu_k+\frac{1}{2\pi i}\oint_{\S1}z^k\left(\frac{f'\overline{f}+p_1}{|f|^2+p_2}-\frac{f'\overline{f}}{|f|^2}\right)dz$$
then
\begin{align*}
|\tilde\mu_k-\mu_k|
=\frac{1}{2\pi} \left|\oint_{\S1}z^k\left(\frac{f'\overline{f}+p_1}{|f|^2+p_2}-\frac{f'\overline{f}}{|f|^2}\right)dz\right|
&\le\frac{1}{2\pi }\oint_{\S1}|z|^k\left|\frac{f'\overline{f}+p_1}{|f|^2+p_2}-\frac{f'\overline{f}}{|f|^2}\right| |dz|\\
&\le \frac{\eps}{(1-\alpha)m^2}\left(1+\frac{M'}{m}\right)
\end{align*}
Note that $N_0=\mu_0$. Because $\eps<\alpha m^2$ we have that
$$|\tilde\mu_0-\mu_0|
\le\frac{\eps}{(1-\alpha)m^2}\left(1+\frac{M'}{m}\right)
<\frac{\alpha m^2}{(1-\alpha)m^2}\left(1+\frac{M'}{m}\right)
=\frac{1}{\left(\frac{1}{\alpha}-1\right)}\left(1+\frac{M'}{m}\right)
=\frac{1}{2}$$
so rounding $\tilde\mu_0$ gives us $N_0$. Thus by the second lemma, we can recover an approximation for $f_i(z)$ with approximated coefficients $\tilde{b}_k$ such that 
$|\tilde{b}_k-b_k| \le C \gamma$. Now re-expressing $\gamma$ in terms of $\eps$ gives the desired result.
\end{proof}

\begin{cor}
If $f$ satisfies the hypotheses of the previous theorem, and in addition $|f(z)| \le M$ for $z \in \S1$, $\eps<\frac{\beta m^2}{d}$ for $d>N$ and
$$\beta=\frac{1}{1+2\left(1+\frac{(d-1)M+M'}{m}\right)}$$
and if $g(z)=z^{d-1}f(\frac{1}{z})$, then using the perturbation with $p_1$ and $p_2$ as above, we can recover an approximation for $g_i(z)=\sum_{k=0}^{N_0}b_kz^k$ (the monic factor of $g(z)$ whose roots are precisely the roots of $g(z)$ inside the unit disk) 
with approximated coefficients $\tilde{b}_k$ such that $|\tilde{b}_k-b_k|$ is $O(\eps)$ as $\eps\rightarrow0$.
\end{cor}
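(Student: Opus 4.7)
The plan is to reduce the corollary to the preceding theorem by applying it to the auxiliary polynomial $g(z) = z^{d-1} f(1/z)$. The point of this substitution is that $g$ is a polynomial of degree at most $d-1$ whose roots inside the open unit disk are exactly the reciprocals of the roots of $f$ lying outside the closed unit disk; together with the theorem applied to $f$ itself (recovering the inner roots of $f$), this would in principle recover all roots of $f$, though the corollary only asserts the recovery of the coefficients $\tilde b_k$ of $g_i$ up to $O(\eps)$ error.

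First I would verify the hypotheses of the previous theorem for $g$ on $\S1$. For $|z|=1$ the point $w=1/z$ also lies on $\S1$, so $|g(z)|=|f(w)|$, which gives the lower bound $|g(z)| \ge m$ and upper bound $|g(z)| \le M$. Differentiating, $g'(z) = (d-1)z^{d-2} f(1/z) - z^{d-3} f'(1/z)$, so on $\S1$ one obtains $|g'(z)| \le (d-1)M + M'$. This is exactly the quantity that appears in the denominator of $\beta$, so the coefficient $\alpha$ that the theorem would assign to $g$ coincides with $\beta$ as defined in the corollary.

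Next I would show that perturbed measurements of $|f|^2$ and $f'\overline{f}$ on $\S1$ give rise to perturbed measurements of $|g|^2$ and $g'\overline{g}$ on $\S1$. The first is immediate: $|g(z)|^2 + p_2(1/z)$ has the same $L^\infty$ error $\eps$. For the second, using $|z|=1$, a short computation gives
$$
g'(z)\overline{g(z)} = (d-1)\bar z\,|f(1/z)|^2 - \bar z^2\, f'(1/z)\overline{f(1/z)} \, ,
$$
so replacing $|f|^2$ by $|f|^2+p_2$ and $f'\overline{f}$ by $f'\overline{f}+p_1$ produces a perturbation $q_1(z) = (d-1)\bar z\, p_2(1/z) - \bar z^2\, p_1(1/z)$ of $g'\overline{g}$ with $\|q_1\|_\infty \le d\eps$.

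Finally I would invoke the previous theorem on $g$ with perturbation level at most $d\eps$, lower bound $m$ on $|g|$, and upper bound $(d-1)M+M'$ on $|g'|$. The smallness condition becomes $d\eps < \beta m^2$, which is exactly the assumption $\eps < \beta m^2/d$. The theorem then yields approximate coefficients $\tilde b_k$ for $g_i$ with
$$
|\tilde b_k - b_k| \le C \frac{d\eps}{(1-\beta)m^2}\Bigl(1 + \frac{(d-1)M+M'}{m}\Bigr) \, ,
$$
which is $O(\eps)$ as $\eps\to 0$. The main bookkeeping obstacle is the computation of $g'\overline g$ and the verification that the factor $d$ picked up in $\|q_1\|_\infty$ is exactly compensated by the $1/d$ in the hypothesis $\eps<\beta m^2/d$; no new analytic ingredient is needed beyond the prior theorem.
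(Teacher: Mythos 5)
Your proposal is correct and follows essentially the same route as the paper: verify the bounds $m\le|g|\le M$ and $|g'|\le(d-1)M+M'$ on $\S1$, compute $g'\overline{g}=(d-1)\bar z|f(\bar z)|^2-\bar z^2 f'(\bar z)\overline{f(\bar z)}$ to transfer the perturbations $p_1,p_2$ into perturbations of the measurements for $g$ bounded by $d\eps$, and then invoke the preceding theorem with $\eps$ replaced by $d\eps$ and $\alpha$ by $\beta$. The only cosmetic difference is that you state the resulting explicit error bound rather than just the $O(d\eps)=O(\eps)$ conclusion.
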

\begin{proof}
First, we note that $\frac{1}{z}=\overline{z}$ on $\S1$. Thus, $|g(z)|=|z^{d-1}f(\overline{z})|=|f(\overline{z})|$ on $\S1$. Then because we have $m\le|f(\overline{z})|\le M$ on $\S1$, we also have $m\le|g(z)|\le M$ on $\S1$. We also know that $g'(z)=(d-1)\frac{1}{z}g(z)-z^{d-1-2}f'(\frac{1}{z})$, so that $|g'(z)|\le (d-1)|g(z)|+|f'(\overline{z})|\le(d-1)M+M'$ on $\S1$. Note that on $\S1$, $|g(z)|^2=|f(\overline{z})|^2$, which has a perturbation of $p_2(\overline{z})$, and
\begin{align*}
g'(z)\overline{g(z)}
&=\left((d-1)\frac{1}{z}g(z)-z^{d-1-2}f'(\tfrac{1}{z})\right)\overline{g(z)}\\
&=(d-1)\overline{z}|g(z)|^2-z^{d-1-2}f'(\overline{z})\overline{z^{d-1}f(\overline{z})}\\
&=(d-1)\overline{z}|f(\overline{z})|^2-\overline{z}^2f'(\overline{z})\overline{f(\overline{z})}
\end{align*}
which has a perturbation of $(d-1)\overline{z}p_2(\overline{z})-\overline{z}^2p_1(\overline{z})$. Note that both of these perturbations are bounded by $d\eps$. Thus, $g(z)$ is a complex polynomial that satisfies the requirements of the theorem, with $N$ replaced by $d$, $M'$ replaced by $(d-1)M+M'$, $\eps$ replaced by $d\eps$, and $\alpha$ replaced by $\beta$. Thus by the theorem, using the perturbed functions for $f(z)$, we can recover an approximation for $g_i(z)$ with approximated coefficients $\tilde{b}_k$ such that $|\tilde{b}_k-b_k|$ is $O(d\eps)=O(\eps)$.
\end{proof}

The objective of the following proposition is to control the error when the recovery of the polynomial factors
from the inner and the outer roots are combined.

\begin{prop}
If $f$ satisfies the hypotheses of the preceding theorem, and in addition $\max_{z \in \S1} |f(z)| = M$ and $\eps<\frac{\beta m^2}{d}$ for $d>N$ with
$$\beta=\frac{1}{1+2\left(1+\frac{(d-1)M+M'}{m}\right)}$$
then using the perturbed functions for $f(z)$, we can recover an approximation (up to a multiplicative constant) for $f(z)=\sum_{k=0}^{d-1}c_kz^k$
 with approximated coefficients $\tilde{c}_k$ such that $\max_k |\tilde{c}_k-c_k|$ is $O(\eps)$.
\end{prop}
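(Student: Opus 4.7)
The plan is to reconstruct $f$ up to a multiplicative constant by combining two applications of the preceding results: apply the theorem to $f$ itself to recover the monic inner factor $f_i(z)=\prod_{|z_j|<1}(z-z_j)$, and apply the corollary to $g(z)=z^{d-1}f(1/z)$ to recover the monic inner factor $g_i$ of $g$, whose roots in the open unit disk consist of a zero of multiplicity $d-1-N$ at the origin together with the reciprocals $\{1/z_j:|z_j|>1\}$ of the outer roots of $f$. Both applications are legitimate since $\eps<\beta m^2/d\le\alpha m^2$, and both yield approximations $\tilde f_i$ and $\tilde g_i$ whose coefficients match the true ones up to $O(\eps)$.

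From $\tilde g_i$ I would then extract an approximation of $\phi_o(z):=\prod_{|z_j|>1}(1-z/z_j)$, a polynomial of degree $N_{out}$ whose roots are exactly the outer roots of $f$. Writing $g_i(z)=z^{d-1-N}q(z)$ with $q$ monic of degree $N_{out}$, a short computation gives $\phi_o(z)=z^{N_{out}}q(1/z)$, so the coefficients of $\phi_o$ are read off from those of $g_i$ by stripping the $d-1-N$ leading zeros at the low end and reversing the remaining coefficient list. The two degrees $N_{in}=\deg f_i$ and $\deg g_i=d-1-N_{in}$ are identified exactly by rounding the approximate $\mu_0$ as in the theorem, so the reindexing is unambiguous and preserves the per-coefficient $O(\eps)$ error, yielding $\tilde\phi_o$.

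Since $f(z)=C\,f_i(z)\phi_o(z)$ for a constant $C$ absorbing the leading coefficient of $f$ and the product of outer roots, I take $\tilde f(z):=C\,\tilde f_i(z)\tilde\phi_o(z)$ as the recovered polynomial, with the understanding that $C$ represents the multiplicative constant in the conclusion. The main estimate is stability of polynomial multiplication: if two polynomials of degree at most $d-1$ have coefficients bounded by $B$ and each coefficient is perturbed by at most $\eta\le B$, then each coefficient of their product is perturbed by at most $3dB\eta$. Crucially, both $f_i$ and $\phi_o$ have all their roots in the closed unit disk, so by Vieta's formulas every coefficient of each is bounded by $\binom{d-1}{k}\le 2^{d-1}$, \emph{uniformly} in the root locations. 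Combining these two facts with $\eta=O(\eps)$ gives $\max_k|\tilde c_k-c_k|=O(\eps)$, with a constant depending on $d$, $m$, $M$, $M'$, and the (fixed) size of $C$.

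The main obstacle is precisely this uniform coefficient bound: if we instead worked with the monic outer factor $\prod_{|z_j|>1}(z-z_j)$, large outer roots could cause its coefficients to blow up and a single polynomial-multiplication step would fail to preserve the $O(\eps)$ accuracy. The whole point of the auxiliary polynomial $g$ is that the reversal $z\mapsto 1/z$ moves the outer roots of $f$ inside the unit disk, where Vieta gives a clean binomial bound regardless of how close $|f|$ comes to zero or how large $|f|$ grows. Once that observation is in place, the remaining steps --- reindexing coefficients, rounding $\tilde\mu_0$ to identify $N_{in}$ and $N_{out}$, and propagating error through one polynomial multiplication --- are routine.
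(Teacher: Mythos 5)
Your proposal is correct and follows essentially the same route as the paper: recover the inner factor $f_i$ from the theorem and the reversed outer factor from $g_i$ via the corollary, observe that their product is a constant multiple of $f$, bound the coefficients of both factors uniformly because all their roots lie in the closed unit disk, and propagate the $O(\eps)$ coefficient error through a single polynomial multiplication. The only cosmetic difference is that you obtain the uniform coefficient bounds directly from Vieta's formulas, whereas the paper bounds $\|f_i\|_\infty$ and $\|f_o\|_\infty$ on the circle by $2^{N_i}$ and $2^{N_o}$ and then invokes equivalence of norms on the finite-dimensional coefficient space.
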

\begin{proof}
Let $N_i$ be the number of roots of $f(z)$ inside the unit disk, and let $N_o$ be the number of roots of $f(z)$ outside the unit disk. Note that $g_i(z)$ obtained in the previous corollary has roots $(\frac{1}{z_j})_{j=1}^{N_o}$ for all roots $z_j$ of $f(z)$ outside the unit disk, with $d-1-N$ additional roots at $0$. Thus,
$$g_i(z)=z^{d-1-N}\prod_{j=1}^{N_o}\left(z-\frac{1}{z_j}\right)$$
Then if we let $f_o(z)=z^{d-1-N_i}g_i(\frac{1}{z})$, we get
$$f_o(z)=z^{d-1-N_i}z^{N-(d-1)}\prod_{j=1}^{N_o}\left(\frac{1}{z}-\frac{1}{z_j}\right)=z^{N-N_i-N_o}\prod_{j=1}^{N_o}\frac{1}{z_j}\left(z_j-z\right)=\prod_{j=1}^{N_o}\frac{(-1)^{N_o}}{z_j}\left(z-z_j\right)$$
and so if $(z_j')_{j=1}^{N_i}$ are the roots of $f(z)$ inside the unit disk, by applying the thoerem we get
$$f_o(z)f_i(z)=\prod_{j=1}^{N_o}\frac{(-1)^{N_o}}{z_j}\left(z-z_j\right)\prod_{j=1}^{N_i}(z-z_j')$$
which is a constant multiple of $f(z)$. Thus, we just need an approximation for $f_o(z)f_i(z)$. In terms of the coefficients of $g_i(z)$, we have
$$f_o(z)=z^{d-1-N_i}\sum_{k=0}^{N_0+d-1-N}b_k\frac{1}{z^k}=\sum_{k=0}^{d-1-N_i}b_kz^{d-1-N_i-k}=\sum_{k=0}^{d-1-N_i}b_{d-1-N_i-k}z^k$$
Note that if $r$ is the multiplicity of the $0$ root of $g_i(z)$, then for all $k$ from $0$ to $r-1$, $b_k=0$. Thus, the coefficients for the $r$ highest degree terms of $f_o(z)$ are equal to $0$, and $f_o(z)$ has degree $N_o$ as it should. Note that we can obtain $f_o(z)$ simply by reversing the order of the coefficients of $g_i(z)$, so our approximated coefficients $\tilde{b}_k$ for $f_o(z)$ are the same approximated coefficients that we obtained in the previous corollary, but in reverse order. Thus $|\tilde{b}_k-b_k|$ is $O(\eps)$ as $\eps\rightarrow0$. In other words, there are numbers $C_{d,b_k}$ which do not depend on $\eps$, such that $|\tilde{b}_k-b_k|\le C_{d,b_k}\eps$. Also, from the theorem, we have that the approximated coefficients $\tilde{a}_j$ for $f_i(z)=\sum_{j=0}^{N_i}a_jz^j$ also have error that is $O(\eps)$ as $\eps\rightarrow0$, so there are numbers $C_{d,a_j}$ which do not depend on $\eps$, such that $|\tilde{a}_j-a_j|\le C_{d,a_j}\eps$. Note that on $\S1$
$$|f_i(z)|=\prod_{j=1}^{N_i}|z-z_j'|\le2^{N_i}$$
and
$$|f_o(z)|=\left|z^{d-1-N_i}z^{N-(d-1)}\prod_{j=1}^{N_o}\left(\frac{1}{z}-\frac{1}{z_j}\right)\right|=\prod_{j=1}^{N_o}\left|\overline{z}-\frac{1}{z_j}\right|\le2^{N_o}, z\in \S1$$
Thus, the max norms on $\S1$ of $f_i(z)$ and $f_o(z)$ are bounded by $2^{N_i}$ and $2^{N_o}$ respectively. Because all norms on a finite dimensional space are equivalent, there exists a number $K_N$, which non-decreasingly depends only on $N$, such that for any complex polynomial $h(z)=\sum_{k=0}^Nc_kz^k$ of degree less than or equal to $N$, we have $\max_{k\le N}|c_k|\le K_N\|h(z)\|_\infty$. Note that
$$f(z)=f_o(z)f_i(z)=\left(\sum_{k=0}^{d-1-N_i}b_{d-1-N_i-k}z^k\right)\left(\sum_{j=0}^{N_i}a_jz^j\right)=\sum_{n=0}^{d-1}\left(\sum_{k=0}^nb_{d-1-N_i-k}a_{n-k}\right)z^n$$
and thus, for the approximation $\tilde{f}(z)=\sum_{k=0}^{d-1}\tilde{c}_kz^k$ we have that for each $k$
$$\tilde{c}_k=\sum_{j=0}^n\tilde{b}_{d-1-N_i-j}\tilde{a}_{k-j}$$
Then for each $k$ we have
\begin{align*}
|\tilde{c}_k-c_k|
&=\left|\sum_{j=0}^n\tilde{b}_{d-1-N_i-j}\tilde{a}_{k-j}-\sum_{j=0}^n b_{d-1-N_i-j}a_{k-j}\right|\\
&\le\sum_{j=0}^n\left|\tilde{b}_{d-1-N_i-j}\tilde{a}_{k-j}-b_{d-1-N_i-j}a_{k-j}\right|\\
&\le\sum_{j=0}^n\left(\left|\tilde{b}_{d-1-N_i-j}\tilde{a}_{k-j}-\tilde{b}_{d-1-N_i-j}a_{k-j}\right|+\left|\tilde{b}_{d-1-N_i-j}a_{k-j}-b_{d-1-N_i-j}a_{k-j}\right|\right)\\
&=\sum_{j=0}^n\left(|\tilde{b}_{d-1-N_i-j}|\left|\tilde{a}_{k-j}-a_{k-j}\right|+\left|\tilde{b}_{d-1-N_i-j}-b_{d-1-N_i-j}\right||a_{k-j}|\right)\\
&\le\sum_{j=0}^n\left((K_{N_o}2^{N_o}+C_{d,b_{d-1-N_i-j}}\eps)C_{d,a_{k-j}}\eps+C_{d,b_{d-1-N_i-j}}\eps K_{N_i}2^{N_i}\right)\\
&\le K_d2^d\sum_{j=0}^n\left(C_{d,a_{k-j}}+C_{d,b_{d-1-N_i-j}}\right)\eps+O(\eps^2)
\end{align*}
Thus, since $\eps$ was bounded above, multiplying two polynomials, whose coefficients have an error that is $O(\eps)$ gives a polynomial whose coefficients have an error that is $O(\eps)$.
\end{proof}

Finally, we obtain a finite number of measurements by discretizing and interpolating with the Dirichlet kernel.

\begin{thm}
Let $f(z)=\sum_{k=0}^{d-1}c_kz^k$ be a complex polynomial with degree at most $d-1$, with fixed positive constants $m$, $M$ and $M'$ such that $m\le|f(z)|\le M$ and $|f'(z)|\le M'$ for all $z$ on the unit circle $\S1$. Let $\omega=e^\frac{2\pi i}{2d-1}$ and $\nu=e^\frac{2\pi i}{3}$ be the $(2d-1)$th and $3$rd roots of unity, and let
$$\beta=\frac{1}{1+2\left(1+\frac{(d-1)M+M'}{m}\right)}$$
Let $\eps>0$ with $\eps<\frac{\beta m^2}{(2d-1)d}$, and assume that we know $2d-1$ values $\{|f(\omega^l)|^2+\eps_{l,0}\}_{l=0}^{2(d-1)}$ and $6d-3$ values $\{|f(\omega^l)+\nu^jf'(\omega^l)|^2+\eps_{l,j}\}_{l=0}^{2(d-1)}$ $_{j=1}^3$ with each $\eps_{l,j}\le\eps$. Then using only these values, we can recover an approximation (up to a multiplicative constant) for $f(z)$ with approximated coefficients $\tilde{c}_k$ such that $|\tilde{c}_k-c_k|$ is $O(\eps)$.
\end{thm}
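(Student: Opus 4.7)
The plan is to reduce this theorem to the preceding proposition by producing, from the finitely many noisy scalar measurements, continuous perturbations $p_1$ of $f'\overline{f}$ and $p_2$ of $|f|^2$ on all of $\S1$ whose sup norms remain of order $\eps$. Once such $p_1,p_2$ are in hand and satisfy the size hypothesis of that proposition, it directly yields the desired $O(\eps)$ estimate on the recovered coefficients.

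To produce pointwise approximations to $f'(\omega^l)\overline{f(\omega^l)}$ from the polarization-style data, I would use the identity for $\nu = e^{2\pi i/3}$ obtained by expanding
$$|f + \nu^j f'|^2 = |f|^2 + \nu^j f'\overline{f} + \bar\nu^j f\overline{f'} + |f'|^2,$$
multiplying by $\bar\nu^j$, and summing over $j \in \{1,2,3\}$. The identities $\sum_{j=1}^3 \bar\nu^j = 0$ and $\sum_{j=1}^3 \bar\nu^{2j} = 0$ annihilate the $|f|^2$, $|f'|^2$, and $f\overline{f'}$ contributions, leaving
$$3 f'\overline{f} = \sum_{j=1}^3 \bar\nu^j\, |f + \nu^j f'|^2.$$
Substituting the noisy measurements on the right and dividing by $3$ gives values $\widetilde{(f'\overline{f})}(\omega^l)$ whose error is at most $\eps$ at each node, since $|\bar\nu^j|=1$ and each $\eps_{l,j}$ is bounded by $\eps$. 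The samples $|f(\omega^l)|^2 + \eps_{l,0}$ already approximate $|f(\omega^l)|^2$ with the same pointwise accuracy.

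Because $f \in P_{d-1}$, both $|f|^2$ and (on $\S1$) $f'\overline{f}$ lie in $T_{d-1}$, so each is uniquely recovered from its $2d-1$ equidistant samples by the Dirichlet-kernel interpolation formula of Step 1 of the injectivity theorem. Writing the resulting error as $p_i(z) = \sum_{l=0}^{2d-2} \eta^{(i)}_l D_{d-1}(z\omega^{-l})$ with $|\eta^{(i)}_l| \le \eps$, and using $\|D_{d-1}\|_\infty = 1$ together with the triangle inequality over $2d-1$ translates, yields the crude but sufficient bound
$$\|p_1\|_\infty,\ \|p_2\|_\infty \le (2d-1)\eps.$$

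By the hypothesis $\eps < \beta m^2/((2d-1)d)$, the amplified perturbation satisfies $(2d-1)\eps < \beta m^2/d$, which is exactly the size condition required by the preceding proposition. Applying that proposition to the interpolated pair $|f|^2 + p_2$ and $f'\overline{f} + p_1$ then produces approximate coefficients $\tilde c_k$, determined up to an overall multiplicative constant, with $\max_k |\tilde c_k - c_k| = O((2d-1)\eps) = O(\eps)$, as required. The main technical point is controlling the interpolation-induced amplification factor $(2d-1)$, which is what the extra factor in the $\eps$-threshold is designed to absorb; beyond that, the argument is a straightforward chaining of the polarization identity, the trigonometric interpolation formula, and the preceding proposition.
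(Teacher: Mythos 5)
Your proposal is correct and follows essentially the same route as the paper: extract $f'\overline{f}$ via the cube-root-of-unity polarization identity, extend the nodal data to all of $\S1$ by Dirichlet-kernel interpolation with the $(2d-1)\eps$ amplification bound, and feed the resulting perturbations into the preceding proposition, whose size hypothesis is met exactly because of the $(2d-1)$ factor built into the $\eps$-threshold. The only cosmetic difference is that you polarize at the sample nodes before interpolating while the paper interpolates each $|f+\nu^j f'|^2$ first and then polarizes; these commute by linearity and give the same bounds.
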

\begin{proof}
If $D_{d-1}(z)=\frac{1}{2d-1} \sum_{k=-(d-1)}^{d-1}z^k$ is the normalized Dirichlet kernel of degree $d-1$, then the set of functions $\{z \mapsto D_{d-1}(z\omega^l)\}_{l=0}^{2(d-1)}$ is orthogonal with respect to the $L^2$ norm on $\S1$, and in addition it provides interpolation identity for each trigonometric polynomial  $g(z)=\sum_{k=-(d-1)}^{d-1}c_kz^k$, $g(z)=\sum_{l=0}^{2(d-1)}g(\omega^l)D_{d-1}(z\omega^{-l})$. If we let $g_0(z)=|f(z)|^2$ and $g_j(z)=|f(z)+\nu^jf'(z)|^2$ for $j=1,2,3$, then because each of these is a trigonometric polynomial
of degree at most $d-1$, we know that  we have
\begin{align*}
\sum_{l=0}^{2(d-1)}\left(g_j(\omega^l)+\eps_{l,j}\right)D_{d-1}(z\omega^{-l})
&=\sum_{l=0}^{2(d-1)}g_j(\omega^l)D_{d-1}(z\omega^{-l})+\sum_{l=0}^{2(d-1)}\eps_{l,j}D_{d-1}(z\omega^{-l})\\
&=g_j(z)+\sum_{l=0}^{2(d-1)}\eps_{l,j}D_{d-1}(z\omega^{-l})
\end{align*}
Let $h_j(z)=\sum_{l=0}^{2(d-1)}\eps_{l,j}D_{d-1}(z\omega^{-l})$ be the error obtained when using the Dirichlet kernel with the known given values to recover each $g_j(z)$. 
Note that
$$|h_j(z)|
=\left|\sum_{l=0}^{2(d-1)}\eps_{l,j}D_{d-1}(z\omega^{-l})\right|
\le\sum_{l=0}^{2(d-1)}\left|\eps_{l,j}D_{d-1}(z\omega^{-l})\right|
\le(2d-1)\eps|D_{d-1}(z\omega^{-l})|=(2d-1)\eps$$
Thus we have recovered approximations for the functions $g_j(z)$ on $\S1$, including $g_0(z)=|f(z)|^2$ with perturbation that is less than $(2d-1)\eps<(2d-1)\frac{\beta m^2}{(2d-1)d}=\frac{\beta m^2}{d}$. However, to use the previous corollary, we also need an approximation for $f'(z)\overline{f(z)}$. To obtain this, note that
\begin{align*}
\frac{1}{3}\sum_{j=1}^3\overline{\nu}^j(g_j(z)+h_j(z))
&=\frac{1}{3}\sum_{j=1}^3\overline{\nu}^jg_j(z)+\frac{1}{3}\sum_{j=1}^3\overline{\nu}^jh_j(z)\\
&=\frac{1}{3}\sum_{j=1}^3\overline{\nu}^j\left|f(z)+\nu^jf'(z)\right|^2+\frac{1}{3}\sum_{j=1}^3\overline{\nu}^jh_j(z)\\
&=\frac{1}{3}\sum_{j=1}^3\overline{\nu}^j\left(f(z)+\nu^jf'(z)\right)\left(\overline{f(z)+\nu^jf'(z)}\right)+\frac{1}{3}\sum_{j=1}^3\overline{\nu}^jh_j(z)\\
&=\frac{1}{3}\sum_{j=1}^3\left(\overline{\nu}^j|f(z)|^2+\overline{\nu}^{2j}f(z)\overline{f'(z)}+f'(z)\overline{f(z)}+\overline{\nu}^j|f'(z)|^2\right)\\
&\qquad\qquad\qquad\qquad+\frac{1}{3}\sum_{j=1}^3\overline{\nu}^jh_j(z)\\
&=f'(z)\overline{f(z)}+\frac{1}{3}\sum_{j=1}^3\overline{\nu}^jh_j(z)
\end{align*}
Note that $\left|\frac{1}{3}\sum_{j=1}^3\overline{\nu}^jh_j(z)\right|\le\frac{1}{3}\sum_{j=1}^3|h_j(z)|<\frac{\beta m^2}{d}$. Thus, we have recovered approximations for $|f(z)|^2$ and $f'(z)\overline{f(z)}$ with error bounded by $\frac{\beta m^2}{d}$. Then, by using the previous corollary with $(2d-1)\eps$ in place of the $\eps$ in that corollary, we recover an approximation (up to a multiplicative constant) for $f(z)$ with approximated coefficients $\tilde{c}_k$ such that $|\tilde{c}_k-c_k|$ is $O(\eps)$.
\end{proof}

\begin{cor}
Let $f$ be a complex polynomial of degree at most $d-1$, let $\omega$ and $\nu$ and $\eps>0$ satisfy the conditions with $m$, $M$ and $\beta$ be as in the preceding theorem,
then 
 $\{|f(\omega^l)|^2+\eps_{l,0}\}_{l=0}^{2(d-1)}\cup\{|f(\omega^l)+\nu^jf'(\omega^l)|^2+\eps_{l,j}\}_{l=0}^{2(d-1)}$ $_{j=1}^3$ with each $\eps_{l,j}\le\eps$
 determines an approximation of $f$, up to a unimodular constant, with accuracy $O(\epsilon)$.
\end{cor}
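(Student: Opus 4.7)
The plan is to derive this corollary directly from the preceding theorem, which already produces approximated coefficients $\tilde c_k$ of a polynomial $\tilde f(z) = \sum_k \tilde c_k z^k$ that approximates an unknown scalar multiple $\lambda f(z)$ with coefficient error of order $\eps$. What remains is to identify $|\lambda|$ to the same accuracy, so that rescaling $\tilde f$ by $|\tilde \lambda|^{-1}$ produces an approximation of $(\lambda/|\lambda|)\,f$, which is $f$ up to a unimodular constant.

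To extract $|\lambda|$ I would use the given measurements $\{|f(\omega^l)|^2 + \eps_{l,0}\}_{l=0}^{2(d-1)}$. Summing these over $l$ yields an approximation of $\sum_l |f(\omega^l)|^2$ with total error at most $(2d-1)\eps$. On the other hand, evaluating $|\tilde f(\omega^l)|^2$ directly from the recovered polynomial and summing yields $|\lambda|^2 \sum_l |f(\omega^l)|^2$ plus an $O(\eps)$ correction, since $\tilde f(\omega^l) = \lambda f(\omega^l) + O(\eps)$ pointwise, $|f(\omega^l)| \le M$, and $|\lambda|$ is itself controlled in terms of $m$, $M$, $d$ through the specific monic-factor normalization used in the theorem. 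Dividing the two sums gives $|\tilde \lambda|^2$ with $\bigl| |\tilde \lambda|^2 - |\lambda|^2 \bigr| = O(\eps)$, and taking a square root produces $|\tilde \lambda|$ with the same order of accuracy.

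The crucial stability input is that $|\lambda|$ is bounded away from zero, which follows from $\sum_l |f(\omega^l)|^2 \ge (2d-1) m^2 > 0$. With $|\lambda|$ thus controlled, setting $\hat f = \tilde f / |\tilde \lambda|$ and applying standard quotient error bounds yields coefficients $\hat c_k$ of $\hat f$ satisfying $|\hat c_k - (\lambda/|\lambda|) c_k| = O(\eps)$. Since $\lambda/|\lambda|$ is unimodular, this is the desired recovery of $f$ up to a unimodular constant with accuracy $O(\eps)$.

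The main obstacle is purely bookkeeping: verifying that division by $|\tilde \lambda|$ and the square-root extraction do not inflate the implicit constants. This relies on $m$, $M$, $M'$, and $d$ being fixed independently of $\eps$, together with the hypothesis $\eps < \beta m^2/((2d-1)d)$, which keeps every intermediate quantity in the regime where the first-order Taylor expansions of the quotient and square root are valid, with constants depending only on $d$, $m$, $M$, and $M'$.
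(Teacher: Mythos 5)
Your proposal is correct and matches the paper's own argument in essence: both use the sum of the measured values $\{|f(\omega^l)|^2+\eps_{l,0}\}$ (i.e.\ Parseval / exact quadrature for the trigonometric polynomial $|f|^2$) to pin down the overall scale with $O(\eps)$ accuracy, and both rely on the lower bound $\|f\|_2\ge(2\pi m)^{1/2}$ to make the rescaling and square-root extraction stable. The only cosmetic difference is that the paper normalizes the recovered coefficient vector to unit Euclidean norm and multiplies by the estimated $\|f\|_2$, whereas you estimate the scalar $|\lambda|$ directly and divide; these are equivalent.
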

\begin{proof}
The measured quantities $\{|f(\omega^l)|^2+\eps_{l,0}\}_{l=0}^{2(d-1)}$, by the Parseval identity, determine $\|f\|_2^2$, up to
an error proportional to $\epsilon$. By the point-wise lower bound on $|f|$, the norm is bounded below by $\|f\|_2 \ge (2\pi m)^{1/2}$,
so $\|f\|_2$ is also known with accuracy $O(\epsilon)$.
Let $C=(\sum_{k=0}^{d-1} |\tilde b_k |^2)^{1/2}$ then $\tilde c_k = \|f\|_2 \tilde b_k / C$ determines an approximation
$g(z)=\sum_{k=0}^{d-1}\tilde c_k z^k$ such that $\|f/g-c\|_{\infty}$ is $O(\eps)$ for some $c\in \mathbb C$ with $|c|=1$.
\end{proof}


To illustrate these results we ran simulations of perturbed values to verify the recovery procedure. We randomly generated a polynomial, and ran a large number of trials with randomized $\eps$ perturbations on the values needed to apply the theorem. For all sample polynomials, we obtained results showing a linear relation between the perturbation $\eps$ and the difference in the coefficients of the recovered polynomial and the original polynomial. 
%
Even when we perturb the values by 50 times the radius of stability $\eps_0=\frac{\beta m^2}{(2d-1)d}$ given by the theorem, the linear bound remains intact. This indicates that either the radius of stability used in our theorem is not the sharpest value that we can obtain for this, or that unstable behavior happens outside of this radius only for pathological examples.
\begin{figure}[h]
\centering
\includegraphics[scale=.9]{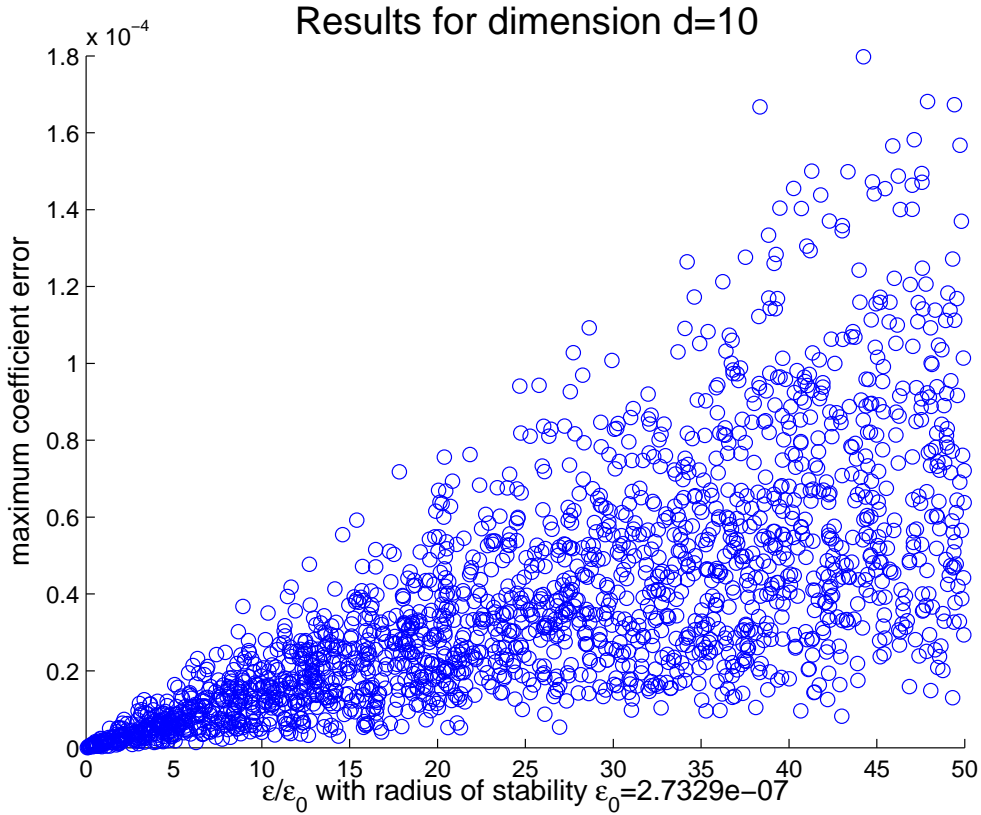}
\caption{Experimentally found recovery error from applying the Newton identities to perturbed measurements.
This plot shows that the linear bound for the recovery error in terms of the noise level remains intact even for values of $\eps$ that are much larger than $\epsilon_0$. }
\label{fig:high}
\end{figure}

\bibliographystyle{amsalpha}

\begin{thebibliography}{99}

\bibitem{Aku}
E. J. Akutowicz, On the determination of the phase of a Fourier integral, I, 
Transactions of the American Mathematical Society 83 (1956) 179-192.


\bibitem{AlexeevBFM:12}
B.\ Alexeev, A.\ S.\ Bandeira, M.\ Fickus, D.\ G.\ Mixon,
Phase retrieval with polarization, preprint, arXiv:1210.7752



\bibitem{Balan:12}
R.\ Balan,
Reconstruction of signals from magnitudes of redundant representations,
preprint, arxiv:1207.1134

\bibitem{BalanBCE:07}
B.\ G.\ Bodmann, P.\ G.\ Casazza, D.\ Edidin, R.\ Balan, 
Fast algorithms for signal reconstruction without phase,
Proc.\ SPIE 6701, Wavelets XII (2007) 67011L.

\bibitem{BalanBCE:09}
R.\ Balan, B.\ G.\ Bodmann, P.\ G.\ Casazza, D.\ Edidin, 
Painless reconstruction from magnitudes of frame coefficients, 
J.\ Fourier Anal.\ Appl.\ 15 (2009) 488--501.

\bibitem{BalanCE:06}
R.\ Balan, P.\ Casazza, D.\ Edidin,
On signal reconstruction without phase,
Appl.\ Comp.\ Harmon.\ Anal.\ 20 (2006) 345--356.

\bibitem{Bandeira}
A. S.\ Bandeira, J. Cahill, D. G.\ Mixon, A. A.\ Nelson,
Saving phase: Injectivity and stability for phase retrieval,
preprint, arxiv:1302.4618

\bibitem{BunkDPDSSV:07}
O.\ Bunk, A.\ Diaz, F.\ Pfeiffer, C.\ David, B.\ Schmitt, D.\ K.\ Satapathy, J.\ F.\ van der Veen,
Diffractive imaging for periodic samples: retrieving one-dimensional concentration profiles across microfluidic channels, 
Acta Cryst.\ A63 (2007) 306--314.


\bibitem{CandesESV:11}
E.\ J.\ Cand\`{e}s, Y.\ Eldar, T.\ Strohmer, V.\ Voroninski, 
Phase retrieval via matrix completion,
Available online: arXiv:1109.0573.

\bibitem{CandesL:12}
E.\ J.\ Cand\`{e}s, X.\ Li, 
Solving quadratic equations via PhaseLift when there are about as many equations as unknowns, 
preprint, arXiv:1208.6247.

\bibitem{CandesSV:11}
E.\ J.\ Cand\`{e}s, T.\ Strohmer, V.\ Voroninski,
PhaseLift: Exact and stable signal recovery from magnitude measurements via convex programming,
preprint, arXiv:1109.4499.


\bibitem{DemanetH:12}
L.\ Demanet, P.\ Hand, 
Stable optimizationless recovery from phaseless linear measurements, 
preprint, arXiv:1208.1803.

\bibitem{EldarM:12}
Y.\ C.\ Eldar, S.\ Mendelson,
Phase retrieval: Stability and recovery guarantees, 
preprint, arXiv:1211.0872.

\bibitem{Fien}
Fienup, J.R., Phase retrieval algorithms: a comparison. Applied Optics 21 (15) 1982) 2758-2769.


\bibitem{Finkelstein:04}
J.\ Finkelstein,
Pure-state informationally complete and ``really'' complete measurements,
Phys.\ Rev.\ A, 70 (2004) 052107.


\bibitem{HeinosaariMW:11}
T.\ Heinosaari, L.\ Mazzarella, M.\ M.\ Wolf,
Quantum tomography under prior information,
Available online: arXiv:1109.5478.



\bibitem{Jaming:10}
P.\ Jaming,
Uniqueness results for the phase retrieval problem of fractional Fourier transforms of variable order,
preprint, arXiv:1009.3418.

\bibitem{Milgram:67}
R.\ J.\ Milgram, 
Immersing projective spaces, 
Ann.\ Math.\ 85 (1967) 473--482.

\bibitem{Mukherjee:81}
A.\ Mukherjee,
Embedding complex projective spaces in Euclidean space,
Bull.\ London Math.\ Soc.\ 13 (1981) 323--324.

\bibitem{Steer:70}
B.\ Steer,
On the embedding of projective spaces in Euclidean space,
Proc.\ London Math.\ Soc.\ 21 (1970) 489--501.


\bibitem{WaldspurgerAM:12}
I.\ Waldspurger, A.\ d'Aspremont, S.\ Mallat, 
Phase recovery, MaxCut and complex semidefinite programming, 
preprint, arXiv:1206.0102.


\end{thebibliography}

\end{document}